\title{Joint convergence along different subsequences of the signed cubic variation of fractional Brownian motion II}
\author{David Nualart\\University of Kansas
  \and
  Jason Swanson\\University of Central Florida
  }
\date{February 22, 2013}
\DeclareMathOperator{\sgn}{sgn}%
\begin{document}

\newtheorem{thm}{Theorem}[section]
\newtheorem{conj}[thm]{Conjecture}
\newtheorem{lemma}[thm]{Lemma}
\theoremstyle{definition}
\newtheorem{prob}[thm]{Problem}
\numberwithin{equation}{section}

\def\al{\alpha}
\def\be{\beta}
\def\ga{\gamma}
\def\Ga{\Gamma}
\def\de{\delta}
\def\De{\Delta}
\def\ep{\varepsilon}
\def\eps{\varepsilon}
\def\ze{\zeta}
\def\th{\theta}
\def\ka{\kappa}
\def\la{\lambda}
\def\La{\Lambda}
\def\vpi{\varpi}
\def\si{\sigma}
\def\Si{\Sigma}
\def\ph{\varphi}
\def\om{\omega}
\def\Om{\Omega}

\def\wt{\widetilde}
\def\wh{\widehat}
\def\ol{\overline}
\def\ds{\displaystyle}

\def\nab{\nabla}
\def\pa{\partial}
\def\To{\Rightarrow}
\def\eqd{\overset{d}{=}}
\def\emp{\emptyset}

\def\pf{\noindent{\bf Proof.} }
\def\qed{\hfill $\Box$}

\providecommand{\flr}[1]{\left\lfloor{#1}\right\rfloor}
\providecommand{\ceil}[1]{\left\lceil{#1}\right\rceil}
\providecommand{\ang}[1]{\left\langle{#1}\right\rangle}

\def\bA{\mathbb{A}}
\def\bB{\mathbb{B}}
\def\bC{\mathbb{C}}
\def\bD{\mathbb{D}}
\def\bE{\mathbb{E}}
\def\bF{\mathbb{F}}
\def\bG{\mathbb{G}}
\def\bH{\mathbb{H}}
\def\bI{\mathbb{I}}
\def\bJ{\mathbb{J}}
\def\bK{\mathbb{K}}
\def\bL{\mathbb{L}}
\def\bM{\mathbb{M}}
\def\bN{\mathbb{N}}
\def\bO{\mathbb{O}}
\def\bP{\mathbb{P}}
\def\bQ{\mathbb{Q}}
\def\bR{\mathbb{R}}
\def\bS{\mathbb{S}}
\def\bT{\mathbb{T}}
\def\bU{\mathbb{U}}
\def\bV{\mathbb{V}}
\def\bW{\mathbb{W}}
\def\bX{\mathbb{X}}
\def\bY{\mathbb{Y}}
\def\bZ{\mathbb{Z}}

\def\bfA{{\bf A}}
\def\bfB{{\bf B}}
\def\bfC{{\bf C}}
\def\bfD{{\bf D}}
\def\bfE{{\bf E}}
\def\bfF{{\bf F}}
\def\bfG{{\bf G}}
\def\bfH{{\bf H}}
\def\bfI{{\bf I}}
\def\bfJ{{\bf J}}
\def\bfK{{\bf K}}
\def\bfL{{\bf L}}
\def\bfM{{\bf M}}
\def\bfN{{\bf N}}
\def\bfO{{\bf O}}
\def\bfP{{\bf P}}
\def\bfQ{{\bf Q}}
\def\bfR{{\bf R}}
\def\bfS{{\bf S}}
\def\bfT{{\bf T}}
\def\bfU{{\bf U}}
\def\bfV{{\bf V}}
\def\bfW{{\bf W}}
\def\bfX{{\bf X}}
\def\bfY{{\bf Y}}
\def\bfZ{{\bf Z}}

\def\cA{\mathcal{A}}
\def\cB{\mathcal{B}}
\def\cC{\mathcal{C}}
\def\cD{\mathcal{D}}
\def\cE{\mathcal{E}}
\def\cF{\mathcal{F}}
\def\cG{\mathcal{G}}
\def\cH{\mathcal{H}}
\def\cI{\mathcal{I}}
\def\cJ{\mathcal{J}}
\def\cK{\mathcal{K}}
\def\cL{\mathcal{L}}
\def\cM{\mathcal{M}}
\def\cN{\mathcal{N}}
\def\cO{\mathcal{O}}
\def\cP{\mathcal{P}}
\def\cQ{\mathcal{Q}}
\def\cR{\mathcal{R}}
\def\cS{\mathcal{S}}
\def\cT{\mathcal{T}}
\def\cU{\mathcal{U}}
\def\cV{\mathcal{V}}
\def\cW{\mathcal{W}}
\def\cX{\mathcal{X}}
\def\cY{\mathcal{Y}}
\def\cZ{\mathcal{Z}}

\def\sA{\mathscr{A}}
\def\sB{\mathscr{B}}
\def\sC{\mathscr{C}}
\def\sD{\mathscr{D}}
\def\sE{\mathscr{E}}
\def\sF{\mathscr{F}}
\def\sG{\mathscr{G}}
\def\sH{\mathscr{H}}
\def\sI{\mathscr{I}}
\def\sJ{\mathscr{J}}
\def\sK{\mathscr{K}}
\def\sL{\mathscr{L}}
\def\sM{\mathscr{M}}
\def\sN{\mathscr{N}}
\def\sO{\mathscr{O}}
\def\sP{\mathscr{P}}
\def\sQ{\mathscr{Q}}
\def\sR{\mathscr{R}}
\def\sS{\mathscr{S}}
\def\sT{\mathscr{T}}
\def\sU{\mathscr{U}}
\def\sV{\mathscr{V}}
\def\sW{\mathscr{W}}
\def\sX{\mathscr{X}}
\def\sY{\mathscr{Y}}
\def\sZ{\mathscr{Z}}

\def\fA{\mathfrak{A}}
\def\fB{\mathfrak{B}}
\def\fC{\mathfrak{C}}
\def\fD{\mathfrak{D}}
\def\fE{\mathfrak{E}}
\def\fF{\mathfrak{F}}
\def\fG{\mathfrak{G}}
\def\fH{\mathfrak{H}}
\def\fI{\mathfrak{I}}
\def\fJ{\mathfrak{J}}
\def\fK{\mathfrak{K}}
\def\fL{\mathfrak{L}}
\def\fM{\mathfrak{M}}
\def\fN{\mathfrak{N}}
\def\fO{\mathfrak{O}}
\def\fP{\mathfrak{P}}
\def\fQ{\mathfrak{Q}}
\def\fR{\mathfrak{R}}
\def\fS{\mathfrak{S}}
\def\fT{\mathfrak{T}}
\def\fU{\mathfrak{U}}
\def\fV{\mathfrak{V}}
\def\fW{\mathfrak{W}}
\def\fX{\mathfrak{X}}
\def\fY{\mathfrak{Y}}
\def\fZ{\mathfrak{Z}}

\maketitle

\begin{abstract}

The purpose of this paper is to provide a complete description the convergence in distribution of two subsequences of the signed cubic variation of the fractional Brownian motion with Hurst parameter $H = 1/6$.

\medskip

\noindent{\bf AMS subject classifications:} Primary 60G22; secondary 60F17.

\noindent{\bf Keywords and phrases:} Fractional Brownian motion, cubic variation, convergence in law.

\end{abstract}
 
\section{Introduction}
Suppose that $B=\{B(t), t\ge 0\}$ is a fractional Brownian motion with Hurst parameter $H=\frac 1 6$. Let $\flr{x}$ denote the greatest integer less than or equal to $x$.  In \cite{NualartOrtiz2008}, Nualart and Ortiz-Latorre proved that the sequence of sums,
  \[
  W_n(t) = \sum_{j=1}^{\flr{nt}} (B(j/n) - B((j-1)/n))^{3},
  \]
converges in law to a  Brownian motion $W=\{W(t), t\ge 0 \}$, with variance   $\kappa^2t$  given by
  \begin{equation*}
  \ka^2 = \frac34\sum_{m\in\bZ}
    (|m + 1|^{1/3} + |m - 1|^{1/3} - 2|m|^{1/3})^3.
  \end{equation*}
The process $W$ is related to the signed cubic variation of  $B$. A detailed analysis of this process has been recently developed by Swanson in \cite{Swanson2011a}, considering this variation as a class of sequences of processes. 

In \cite{BNS}, Burdzy, Nualart and Swanson studied the convergence in distribution of the sequence of two-dimensional processes $\{(W_{a_n}(t), W_{b_n}(t))\}$, where $\{a_n\}_{n=1}^\infty$ and $\{b_n\}_{n=1}^\infty$ are two strictly increasing sequences of natural numbers converging to infinity.  A basic assumption for the results of \cite{BNS} and also for the results of this paper is that   $L_n\to L\in[0,\infty]$, where $L_n=b_n/a_n$. By \cite[Corollary 3.6]{BNS}, if $L\in\{0,\infty\}$, then $W_{a_n}$ and $W_{b_n}$ converge to independent Brownian motions. We will therefore assume that $L\in(0,\infty)$.

The function $f_L(x) = \sum_{m\in\bZ}f_{m,L}(x)$, where
  \begin{equation} \label{ef}
  f_{m,L}(x) = (|x - m + 1|^{1/3} + |x - m - L|^{1/3}
    - |x - m|^{1/3} - |x - m + 1 - L|^{1/3})^3,
  \end{equation}
plays a fundamental role in the analysis of the convergence in distribution of $\{(W_{a_n}(t), W_{b_n}(t))\}$. Under some conditions, the limit of this sequence    is a two-dimensional Gaussian process $X^\rho$, independent of $B$, whose components are Brownian motions with variance $\kappa^2t$, and with covariance $\int_0 ^t \rho(s)\,ds$ for some function $\rho$. In terms of  the function  $\rho\in C[0,\infty)$, the process $X^\rho$ can be expressed as
 \begin{equation} \label{main1b}
  X^\rho(t) = \int_0^t \si(s)\,d\mathbf{W}(s),
  \end{equation}
  where  $\si$ is given by
  \begin{equation}\label{main1a}
  \si(t) = \ka\begin{pmatrix}
    \sqrt{1 - |\ka^{-2}\rho(t)|^2} &\ka^{-2}\rho(t)\\
    0 &1
    \end{pmatrix},
  \end{equation}
and  $\mathbf{W}=(W^1, W^2)$ is a standard, 2-dimensional Brownian motion.
More specifically, the main result of \cite{BNS} is the  following theorem, which is obtained  using  the central limit theorem for multiple stochastic integrals proved by Peccati and Tudor in \cite{PeccatiTudor2005} (see also \cite{NuPe}).

\begin{thm}\label{C:main3}
  Let $I=\{n:L_n=L\}$ and $c_n=\gcd(a_n,b_n)$.  
Then $(B,W_{a_n},W_{b_n}) \To (B,X^\rho)$ in  the Skorohod space $D_{\bR^3}[0,\infty)$ as $n\to\infty$, in the following cases:
  \begin{enumerate}[(i)]
  \item  The set  $I^c$ is finite (which implies $L\in\bQ$). In this case, if $L=p/q$, where $p,q\in\bN$ are relatively prime, then for all $t\ge 0$,
    \begin{equation*} 
    \rho(t) = \frac3{4p}\sum_{j=1}^q f_L(j/q).
    \end{equation*}
  \item   There exists $k\in\bN$ such that $b_n=k\mod a_n$ for all $n$. In this case, for all $t\ge0$,
      \[
    \rho(t) = \frac3{4L}f_L(kt).
    \]
  \item The set $I$ is finite and $c_n\to\infty$. In this case, for all $t\ge0$,
    \[
    \rho(t) = \frac3{4L}\int_0^1 f_L(x)\,dx.
    \]
 
  \end{enumerate}
\end{thm}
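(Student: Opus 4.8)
The plan is to apply the Peccati--Tudor central limit theorem for vectors of multiple stochastic integrals, exactly as in \cite{BNS}. Write $B$ as the isonormal Gaussian process over the Hilbert space $\fH$ obtained by completing the step functions under the inner product $\ang{\mathbf{1}_{[0,s]},\mathbf{1}_{[0,t]}}_{\fH}=R_H(s,t)$, and let $\partial_j = B(j/n)-B((j-1)/n)$. Since $(B(j/n)-B((j-1)/n))^3 = I_3(\de_j^{\otimes 3}) + 3\si_n^2\,I_1(\de_j) $ for an appropriate Hermite-type expansion (here $\de_j$ is the increment element of $\fH$ and $\si_n^2 = \bE[\partial_j^2] = n^{-2H} = n^{-1/3}$), each $W_{a_n}(t)$ decomposes as a third chaos term plus a first chaos term. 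The first chaos terms are handled exactly as in \cite{NualartOrtiz2008}: they form a martingale whose bracket tends to $0$, so they are asymptotically negligible; meanwhile the joint convergence of $(B, \text{third-chaos part of }W_{a_n}, \text{third-chaos part of }W_{b_n})$ to the Gaussian limit $(B,X^\rho)$ will follow once (a) the covariance structure converges and (b) the fourth-cumulant / contraction conditions of Peccati--Tudor vanish. Condition (b) is essentially inherited from \cite{NualartOrtiz2008} and \cite{BNS} because each coordinate individually already satisfies it; the joint statement adds no new contraction to control beyond the cross term, which is dominated by the diagonal ones via Cauchy--Schwarz. So the crux is (a): computing $\lim_{n\to\infty}\bE[W_{a_n}(s)\,W_{b_n}(t)]$ and identifying it with $\int_0^{s\wedge t}\rho(u)\,du$ for the claimed $\rho$ in each of the three regimes.

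The key computation is therefore the asymptotics of the cross-covariance
\[
\bE\bigl[W_{a_n}(s)\,W_{b_n}(t)\bigr]
= \sum_{i=1}^{\flr{a_n s}}\sum_{j=1}^{\flr{b_n t}} \bE\bigl[(\Delta_i^{a_n}B)^3 (\Delta_j^{b_n}B)^3\bigr],
\]
where $\Delta_i^{a_n}B = B(i/a_n)-B((i-1)/a_n)$. By the Gaussian product formula, $\bE[(\Delta_i^{a_n}B)^3(\Delta_j^{b_n}B)^3]$ is, up to a combinatorial constant and powers of the variances, a function of the normalized correlation $r_n(i,j) = \bE[\Delta_i^{a_n}B\,\Delta_j^{b_n}B]/(\si_{a_n}\si_{b_n})$; more precisely, since both increments have the same variance order $n^{-1/3}$ up to the ratio $L_n$, one gets $\bE[(\Delta_i^{a_n}B)^3(\Delta_j^{b_n}B)^3]=6\,\bE[\Delta_i^{a_n}B\,\Delta_j^{b_n}B]^3 + 9\,\si_{a_n}^2\si_{b_n}^2\,\bE[\Delta_i^{a_n}B\,\Delta_j^{b_n}B]$, and the second term sums to something negligible (it contributes to the first-chaos cross term, already dispatched). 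The main term is $6\sum_{i,j}\bE[\Delta_i^{a_n}B\,\Delta_j^{b_n}B]^3$. Using the covariance of fBm increments, $\bE[\Delta_i^{a_n}B\,\Delta_j^{b_n}B]$ equals $\tfrac12 a_n^{-1/3}$ times a fourth-difference of the function $x\mapsto|x|^{1/3}$ evaluated at the four points $i - j\,(a_n/b_n) \pm \text{shifts}$; cubing and recognizing the definition \eqref{ef} of $f_{m,L}$ shows that the inner sum over $j$ near a fixed $i$, after rescaling, is a Riemann sum for a translate of $f_L$.

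The three cases differ only in how the double sum organizes into Riemann sums, and this is where the arithmetic of $a_n,b_n$ enters. In case (i), $L=p/q$ is constant along a cofinite set, so $b_n = L a_n$ exactly and $a_n i - b_n j$ ranges over a lattice of spacing determined by $\gcd$ considerations; fixing the residue of $j$ modulo $q$ and summing $i$ gives $q$ Riemann sums each producing $f_L$ evaluated at $j/q$, and the prefactor $1/(4p)$ emerges from counting. In case (ii), $b_n \equiv k \pmod{a_n}$ pins the relevant offset to $k$ for every $n$, so after rescaling time by $t$ one directly gets $f_L(kt)$ with prefactor $3/(4L)$. In case (iii), $c_n=\gcd(a_n,b_n)\to\infty$ means the offsets $a_n i - b_n j \pmod{\gcd}$ equidistribute, so the discrete average over offsets converges to the continuous average $\int_0^1 f_L(x)\,dx$, again with prefactor $3/(4L)$; here one also uses that $I$ finite forces working along the sequence where $L_n\to L$ without $L_n=L$, so the Riemann-sum errors from $L_n\ne L$ must be shown to vanish, which follows from uniform continuity of $f_L$ in both arguments. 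The main obstacle is precisely this case-(iii) equidistribution step combined with controlling the double limit (in $n$, and in the truncation of the sum over $m\in\bZ$ defining $f_L$): one must show the tail $\sum_{|m|>M} f_{m,L}$ is uniformly small, which relies on the bound $f_{m,L}(x) = O(|m|^{-8/3})$ coming from the fact that the fourth difference of $|x|^{1/3}$ decays like $|m|^{-11/3}$ wait --- like $|m|^{-(1/3 - 4)}$, i.e. $|m|^{-11/3}$, cubed gives $|m|^{-11}$, so in fact the tail is very small and the interchange of limits and the sum over $m$ is justified by dominated convergence. Once these uniform tail bounds are in place, each case reduces to a standard Riemann-sum argument and the theorem follows from Peccati--Tudor.
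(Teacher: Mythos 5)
Your overall architecture matches the one behind this theorem: the statement is quoted from \cite{BNS} (the present paper does not reprove it; it only recalls the reduction), and in \cite{BNS} the proof is exactly as you outline at the structural level --- chaos decomposition of the cubed increments, negligibility of the first-chaos part, the Peccati--Tudor theorem reducing joint convergence to convergence of the cross-covariance, and the identification of that covariance with $\frac3{4L}\sum_{m\in\bZ}\lim_n\frac1{a_n}\sum_{j=1}^{\flr{a_nt}}f_{m,L}(\{jL_n\})$. So items (a) and (b) of your plan are sound. The problem is that the covariance limit, which you correctly call the crux, is precisely where your sketch has a genuine gap.

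First, your proposal to absorb the error coming from $L_n\ne L$ ``by uniform continuity of $f_L$ in both arguments'' fails. Although $L_n\to L$, the evaluation points $\{jL_n\}$ are \emph{not} uniformly close to $\{jL\}$ as $j$ runs up to $\flr{a_nt}$, since $j|L_n-L|$ reaches order $a_n|L_n-L|$, which under the hypotheses of cases (ii) and (iii) does not tend to $0$ (in case (ii) it equals $k$ for large $n$; in case (iii) with $L=p/q\in\bQ$ it is at least $c_n/q\to\infty$). If such a replacement were legitimate it would prove too much: in case (ii) one has $L\in\bN$, so $\{jL\}=0$ for all $j$, and the replacement would yield the constant $\rho\equiv\frac3{4L}f_L(0)$ instead of the genuinely $t$-dependent $\frac3{4L}f_L(kt)$ asserted by the theorem (and consistent with Theorem \ref{t2} here); similarly in case (iii) with $L\in\bQ$ it would return the case-(i) answer rather than $\frac3{4L}\int_0^1 f_L$. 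This is exactly the ``principal challenge'' the paper flags: $f_{m,L}(\{jL_n\})$ cannot be replaced by $f_{m,L}(\{jL\})$; only the \emph{parameter} $L$ in $f_{m,\cdot}$ may be frozen, not the argument. Second, your mechanism for case (iii), ``the offsets $a_ni-b_nj\pmod{\gcd}$ equidistribute,'' is not the argument and is not substantiated. The actual proof writes $L_n=p_n/q_n$ in lowest terms and uses the decomposition $\flr{a_nt}=\al_nq_n+r_n$: on each block of $q_n$ consecutive indices, $j\mapsto\{jL_n\}$ runs exactly once through $\{0,1/q_n,\dots,(q_n-1)/q_n\}$; one must then show (a) $q_n\to\infty$ (this follows from $I$ finite, since rationals with bounded denominators converging to $L$ would eventually equal $L$), so the block averages converge to $\int_0^1 f_{m,L}(x)\,dx$, and (b) the incomplete block is negligible, which is where $c_n\to\infty$ enters, via $r_n/a_n< q_n/a_n=1/c_n\to0$. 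Neither use of the hypotheses appears in your sketch, and case (ii) is likewise asserted rather than derived (there the point is that $\{jL_n\}=\{jk/a_n\}$, giving a true Riemann sum of mesh $k/a_n$ for $\int_0^t f_L(ks)\,ds$). A further, harmless slip: the bracket in \eqref{ef} is a second-order difference of $|x|^{1/3}$, so $f_{m,L}(x)=O(|m|^{-5})$, not $O(|m|^{-8/3})$ or $O(|m|^{-11})$; your dominated-convergence step survives since any of these is summable in $m$.
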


This  type of result  was motivated  by the relationship between higher  signed  variations of fractional Brownian motions and the change of variable  formulas in distribution  for stochastic integrals with respect to these processes that have appeared recently in the literature  (see \cite{BS,NR,NoReSw}).  

Theorem \ref{C:main3} covers many simple and interesting pairs of sequences, and helps to tell a surprising story about the asymptotic correlation between the sequences, $\{W_{a_n}\}$ and $\{W_{b_n}\}$, both of which are converging to a Brownian motion. For example, by Theorem \ref{C:main3}(i), we may conclude that the asymptotic correlation of $W_n(t)$ and $W_{2n}(t)$ is a constant that does not depend on $t$, and whose numerical value is approximately 0.201. Likewise, Theorem \ref{C:main3}(iii) shows that the asymptotic correlation of $W_{n^2}(t)$ and $W_{n(n+1)}(t)$ is not dependent on $t$ and is approximately 0.102. Perhaps more surprisingly, Theorem \ref{C:main3}(ii) shows that the asymptotic correlation of $W_n(t)$ and $W_{n+1}(t)$ \textit{does} depend on $t$. Numerical calculations suggest that the correlation varies greatly with $t$, converging to 1 as $t\downarrow 0$, and being as low as about 0.075 for $t=0.8$.

Nonetheless, there are many simple and interesting pairs of sequences that are \textit{not} covered by Theorem \ref{C:main3}. For example, the sequences $a_n=n^2$ and $b_n=(n+1)^2$ are not covered; nor are the sequences $a_n=2n$ and $b_n=3n+1$. Additionally, many sequences whose ratios converge to an irrational number are not covered by this theorem. 

The purpose of this paper is to provide a complete description of the asymptotic behavior of $W_{a_n}(t)$ and $W_{b_n}(t)$ for all sequences $\{a_n\}$ and $\{b_n\}$. We will  show that the asymptotic correlation depends only on $L=\lim L_n$ when $L$ is irrational; and when $L$ is rational, it depends also on $\lim a_n|L_n-L|$. In the next section we state and prove this result and provide some remarks and examples.

\section{Main result}

Let $X^\rho$ the two-dimensional process defined in  (\ref{main1b}). Recall that $f_L(x) =\sum_{m\in \bZ} f_{m,L}(x)$, where $f_{m,L}$ is the function defined in (\ref{ef}). By \cite[Lemma 2.6]{BNS}, the series defining $f_L$ converges uniformly on $[0,1]$. Also note that $f_L$ is periodic with period 1. We first need the following technical result.

\begin{lemma} \label{lem1}
Let $L=p/q$, where $p,q \in \bN$ are relatively prime numbers. Then, for any $x\in \bR$ and $\eta =1, \dots, q$ we have $f_L(\eta L-x) =f_L(\wt\eta L+x)$, where $\wt\eta=q-\eta+1$.
\end{lemma}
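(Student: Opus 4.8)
The plan is to reduce the lemma to the single reflection symmetry
\[
f_L(L-z)=f_L(z)\qquad\text{for every }z\in\bR ,
\]
and then recover the stated formula from the periodicity of $f_L$ together with the fact that $qL=p\in\bN$. First I would record the reduction, taking the reflection identity for granted. Fix $\eta\in\{1,\dots,q\}$, put $\wt\eta=q-\eta+1$, and apply the reflection identity with $z=\wt\eta L+x$, so that $f_L(\wt\eta L+x)=f_L\bigl((1-\wt\eta)L-x\bigr)$. Since
\[
\eta L-x-\bigl((1-\wt\eta)L-x\bigr)=(\eta-1+\wt\eta)L=qL=p\in\bZ
\]
and $f_L$ has period $1$, we get $f_L\bigl((1-\wt\eta)L-x\bigr)=f_L(\eta L-x)$, hence $f_L(\wt\eta L+x)=f_L(\eta L-x)$, which is the assertion. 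Notice that only this last step uses $L\in\bQ$; the reflection identity itself will hold for every $L>0$.

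Next I would prove the reflection identity. Write $g(u)=|u|^{1/3}$, which is even, and set
\[
h_{m,L}(x)=g(x-m+1)+g(x-m-L)-g(x-m)-g(x-m+1-L),
\]
so $f_{m,L}(x)=h_{m,L}(x)^3$ and $f_L(x)=\sum_{m\in\bZ}h_{m,L}(x)^3$. The crux is the term-by-term identity
\[
h_{m,L}(L-x)=h_{1-m,L}(x)\qquad(m\in\bZ,\ x\in\bR).
\]
To check it, substitute $L-x$ for the variable in $h_{m,L}$: the $L$'s cancel in two of the four arguments, leaving $-(x+m)$ and $-(x+m-1)$, while the other two become $-(x+m-1-L)$ and $-(x+m-L)$; since $g$ is even this gives $h_{m,L}(L-x)=g(x+m-1-L)+g(x+m)-g(x+m-L)-g(x+m-1)$, which is precisely $h_{1-m,L}(x)$ written out. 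The series $\sum_m f_{m,L}$ converges uniformly on $[0,1]$ by \cite[Lemma 2.6]{BNS}, hence everywhere by periodicity, and in fact absolutely (the general term is $O(|m|^{-5})$), so its terms can be rearranged; using that $m\mapsto 1-m$ is a bijection of $\bZ$,
\[
f_L(L-z)=\sum_{m\in\bZ}h_{m,L}(L-z)^3=\sum_{m\in\bZ}h_{1-m,L}(z)^3=\sum_{n\in\bZ}h_{n,L}(z)^3=f_L(z).
\]

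The main obstacle will be the (elementary but fiddly) bookkeeping in the identity $h_{m,L}(L-x)=h_{1-m,L}(x)$ — keeping track of the four arguments and the parity of $g$. Once that is settled, the rest is just the arithmetic $(\eta-1+\wt\eta)L=qL=p$, the periodicity of $f_L$, and the rearrangement of an absolutely convergent series; none of these should cause trouble.
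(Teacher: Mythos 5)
Your proof is correct. It differs from the paper's in how it organizes the computation: the paper proves the identity in a single term-by-term step, showing $f_{m,L}(\eta L-x)=f_{\wt m,L}(\wt\eta L+x)$ with the index shift $\wt m=-m+1+p$, which absorbs both the reflection and the integer translation by $p$ at once, and then reindexes the sum. You instead factor the statement into two cleaner ingredients: a reflection identity $f_L(L-z)=f_L(z)$, proved term-by-term via the evenness of $|u|^{1/3}$ and the bijection $m\mapsto 1-m$ (your verification of $h_{m,L}(L-x)=h_{1-m,L}(x)$ checks out), plus the periodicity of $f_L$ and the arithmetic $(\eta-1+\wt\eta)L=qL=p\in\bZ$. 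At bottom both arguments rest on the same computation (parity of $|\cdot|^{1/3}$ and a reindexing of the absolutely convergent series, which you justify via the $O(|m|^{-5})$ decay — a point the paper passes over silently), but your decomposition buys something: the reflection symmetry about $L/2$ holds for every $L>0$, rational or not, and it isolates exactly where the hypothesis $L=p/q$ enters, namely only through $qL\in\bZ$ and the (already stated) period-one property of $f_L$. The paper's version is marginally more self-contained in that it never needs to invoke periodicity of $f_L$ inside the proof.
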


\begin{proof}
For any $m\in \bZ$ set  $\wt m =-m+1+p$. Then
  \begin{align*}
  f_{m,L}(\eta L - x) &= \Bigg(
    \bigg|\frac{\eta p}q - x - m + 1\bigg|^{1/3}
    + \bigg|\frac{\eta p}q - x - m - \frac pq\Bigg|^{1/3}\\
  &\qquad\qquad
    - \bigg|\frac{\eta p}q - x - m \bigg|^{1/3}
    - \bigg|\frac{\eta p}q - x - m + 1 - \frac pq\bigg|^{1/3}
  \Bigg)^3\\
  &= \Bigg(
    \bigg|-\frac{\eta p}q + x - \wt m + p\bigg|^{1/3}
    + \bigg|-\frac{\eta p}q + x - \wt m + 1 + p + \frac pq\bigg|^{1/3}\\
  &\qquad\qquad
    - \bigg|-\frac{\eta p}q + x - \wt m + 1 + p\bigg|^{1/3}
    - \bigg|-\frac{\eta p}q + x - \wt m + p +\frac pq\bigg|^{1/3}
    \Bigg)^3.
  \end{align*}
Notice that $\wt \eta L = p+\frac pq -\frac {\eta p}q$. Therefore,
  \begin{align*}
  f_{m,L}(\eta L - x) &= \big(
    |\wt\eta L + x - \wt m - L|^{1/3}
    + |\wt\eta L + x - \wt m + 1|^{1/3}\\
  &\qquad\qquad
    - |\wt\eta L + x - \wt m - L + 1|^{1/3}
    - |\wt\eta L + x - \wt m|^{1/3}
    \big)^3\\
  &= f_{\wt m,L}(\wt\eta L + x).
  \end{align*}
As a consequence,
  \[
  f_L(\eta L - x) = \sum_{m\in\bZ} f_{m,L}(\eta L - x)
    = \sum_{m\in\bZ} f_{\wt m,L}(\wt\eta L + x)
    = \sum_{\wt m\in\bZ} f_{\wt m,L}(\wt\eta L + x)
    = f_L(\wt\eta L + x),
  \]
which completes the proof.
\end{proof}

The next result is the main theorem of this paper. Together with the cases $L=0$ and $L=\infty$, covered in \cite[Corollary 3.6]{BNS}, this theorem gives a complete description of all subsequential limits of $(W_{a_n},W_{b_n})$ for any pair of subsequences of $\{W_n\}$.

\begin{thm}\label{t2}
Let $\{a_n\}_{n=1}^\infty$ and $\{b_n\}_{n=1}^\infty$ be strictly increasing sequences in $\bN$. Let $L_n=b_n/a_n$ and suppose $L_n\to L\in(0,\infty)$. Let $\de_n=L_n-L$.   Then, $(B,W_{a_n},W_{b_n}) \To (B,X^\rho)$ in $D_{\bR^3}[0,\infty)$ as $n\to\infty$, in the following cases:
\begin{enumerate}[(i)]
\item $L\in\bQ$ and $a_n|\de_n|\to k\in[0,\infty)$. In this case, if we write $L=p/q$, where $p,q\in\bN$ are relatively prime, then, for all $t\ge 0$,
  \[
  \rho (t) = \frac3{4p}\sum_{j=1}^q
    f_L\left({\frac jq + kt}\right).
  \]
\item $L\in\bQ$ and $a_n|\de_n|\to\infty$, or $L\notin\bQ$. In this case, for all $t\ge0$,
  \[
  \rho (t) = \frac3{4L}\int_0^1 f_L(x)\,dx.
  \]
\end{enumerate}
\end{thm}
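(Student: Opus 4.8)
The plan is to reduce to the central limit theorem for multiple stochastic integrals of Peccati and Tudor, exactly as in \cite{BNS}, and to identify the covariance of the limit by a careful analysis of the discrete sums as $n\to\infty$. First I would recall from \cite{BNS} that the pair $(W_{a_n},W_{b_n})$ can be written (modulo negligible terms) as a vector of third multiple Wiener--It\^o integrals with respect to the isonormal process associated with $B$, and that tightness together with asymptotic independence from $B$ have already been established there under the standing assumption $L_n\to L\in(0,\infty)$. Thus the only thing that changes relative to Theorem \ref{C:main3} is the limiting covariance function, i.e. the function $\rho$; by the Peccati--Tudor criterion it suffices to show that the off-diagonal entry of the covariance matrix of $(W_{a_n}(t),W_{b_n}(t))$ converges to $\int_0^t\rho(s)\,ds$ with the claimed $\rho$, and that the relevant contractions vanish in the limit (the latter is insensitive to the arithmetic of $a_n,b_n$ and follows as in \cite{BNS}).

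The heart of the matter is therefore the computation of
\[
\mathrm{Cov}\bigl(W_{a_n}(t),W_{b_n}(t)\bigr)
= \sum_{i=1}^{\flr{a_nt}}\sum_{j=1}^{\flr{b_nt}}
\bE\bigl[(\De_i^{a_n}B)^3(\De_j^{b_n}B)^3\bigr],
\]
where $\De_i^{a}B=B(i/a)-B((i-1)/a)$. Using the product formula for Hermite polynomials (or the Gaussian moment formula) and the scaling of fractional Brownian motion with $H=1/6$, the dominant contribution is the ``fully paired'' term, which is a constant multiple of $a_n^{-1}b_n^{-1}\sum_{i,j} r_n(i,j)^3$, where $r_n(i,j)$ is the correlation between $\De_i^{a_n}B$ and $\De_j^{b_n}B$. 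Writing $b_n=L_n a_n$ and using the explicit increment covariance of fractional Brownian motion, one sees that $a_n^{1/3}r_n(i,j)$ is, up to lower-order corrections coming from $\de_n=L_n-L$, precisely the inner quantity $|x-m+1|^{1/3}+|x-m-L|^{1/3}-|x-m|^{1/3}-|x-m+1-L|^{1/3}$ appearing in \eqref{ef}, evaluated at $x = (j - L_n i)/1$-type arguments and $m=i-(\text{integer part})$. The sum over $i$ of blocks of $q$ consecutive values (using that $f_L$ has period $1$ and $L=p/q$) collapses, after a Riemann-sum/equidistribution argument in the index $i$, to $\frac3{4p}\sum_{j=1}^q f_L(j/q+kt)\,ds$ integrated over $s\in[0,t]$ in case (i); in case (ii) the shift $a_n\de_n\to\infty$ (or the irrationality of $L$) makes the relevant sequence equidistributed modulo $1$, so Weyl's equidistribution theorem replaces $\frac1q\sum_{j=1}^q f_L(j/q+\cdot)$ by $\int_0^1 f_L(x)\,dx$, giving the stated $\rho$.

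The main obstacle, and the place where Lemma \ref{lem1} enters, is controlling the double sum uniformly and showing that the ``diagonal band'' $|L_n i - j|$ bounded is the only surviving part while the off-diagonal terms are summable and negligible; here one needs the decay of $f_{m,L}(x)$ in $m$ (uniform on $[0,1]$, from \cite[Lemma 2.6]{BNS}) together with the symmetry $f_L(\eta L - x)=f_L(\wt\eta L+x)$ of Lemma \ref{lem1}, which is exactly what makes the contributions of indices $i$ on the two sides of a given $j$ combine into a single copy of $f_L$ rather than producing a spurious asymmetric term. Once the off-diagonal contributions are shown to be $o(1)$ and the diagonal band is handled by the equidistribution/Riemann-sum argument above, the identification of $\rho$ is complete, and an application of the Peccati--Tudor theorem (in the multidimensional form used in \cite{BNS}, together with the stable-convergence upgrade to include $B$) yields the asserted convergence in $D_{\bR^3}[0,\infty)$. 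I would also note that case (i) with $k=0$ recovers Theorem \ref{C:main3}(i) and that case (ii) recovers Theorem \ref{C:main3}(iii), providing a consistency check on the constants.
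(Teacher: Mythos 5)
Your overall route is the same as the paper's: reduce, via \cite{BNS} (the Peccati--Tudor criterion, tightness, vanishing contractions, and the identity expressing $\lim_n E[W_{a_n}(t)W_{b_n}(t)]$ through the sums $\frac1{a_n}\sum_{j=1}^{\flr{a_nt}}f_{m,L}(\{jL_n\})$), to the convergence of the covariance, and then analyze those sums by blocks of length $q$ and a Riemann-sum argument in case (i), and by an equidistribution-type argument otherwise. However, there are two genuine gaps. First, in the irrational case you appeal directly to Weyl's equidistribution theorem, but the summands are $f_{m,L}(\{jL_n\})$ with $L_n=b_n/a_n$ rational and varying with $n$, not $f_{m,L}(\{jL\})$; the whole difficulty is precisely that $\{jL_n\}$ is not uniformly close to $\{jL\}$ as $j$ ranges up to $\flr{a_nt}$, so the classical theorem for the fixed irrational $L$ cannot simply be quoted. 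What is needed is a quantitative adaptation of the Fourier-series proof: approximate $f_{m,L}$ uniformly by a trigonometric polynomial $F_N$ (Jackson's theorem, using H\"older continuity), bound the exponential sums $\sum_{j=1}^{\flr{a_nt}}e^{2\pi ikjL_n}$ uniformly for $0<|k|\le N$ with $q_n\nmid k$ by using that $|1-e^{2\pi ikL_n}|$ stays bounded away from $0$ for large $n$ (because $L_n\to L\notin\bQ$), and dispose of the frequencies with $q_n\mid k$ using $q_n\to\infty$. Without some such uniformity in $n$, the case $L\notin\bQ$ is not proved.

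Second, your use of Lemma \ref{lem1} is misplaced. The reduction to the ``diagonal band'' and the interchange of the limit with the sum over $m$ are already supplied by \cite[Lemma 2.6]{BNS} and by (3.18), (3.20) and Remark 3.3 of \cite{BNS}; Lemma \ref{lem1} plays no role there. Its actual role is to handle the sign of $\de_n$: when $a_n|\de_n|\to k\in(0,\infty)$ and $\de_n<0$, the Riemann-sum argument produces $\sum_{\eta=1}^q f_L(\eta L - ks)$ rather than $\sum_{\eta=1}^q f_L(\eta L + ks)$, and the symmetry $f_L(\eta L-x)=f_L(\wt\eta L+x)$ of Lemma \ref{lem1} is exactly what shows the two sums coincide; a subsequence argument then covers sequences along which $\de_n$ changes sign. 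Your sketch in effect treats only $\de_n>0$ and leaves this case unaddressed. (A minor further point: in case (ii) with $L\in\bQ$ and $a_n|\de_n|\to\infty$ no equidistribution is needed; the periodicity of $\wh f_{m,L}$ together with the growth of the integration range $[0,a_n|\de_n|t]$ already gives $t\int_0^1 f_{m,L}(x)\,dx$.)
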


Note that between the two parts of this theorem, there is, at least formally, a sort of continuity in $k$. For fixed $q$, since $f_L$ is periodic with period 1, we have
  \[
  \int_0^t\bigg[\frac3{4p}\sum_{j=1}^q
    f_L\left({\frac jq + ks}\right)\bigg]\,ds
    \to \int_0^t\bigg[\frac3{4L}\int_0^1 f_L(x)\,dx\bigg]\,ds,
  \]
as $k\to\infty$.

To elaborate on the conditions in the two parts of this theorem and their connections to Theorem \ref{C:main3}, first note that if $L$ is rational and $L_n\ne L$, then
  \begin{equation}\label{num_thry}
  a_n|\de_n| = \left|{\frac{b_nq - a_np}{q}}\right| \ge \frac1q,
  \end{equation}
since the numerator is a nonzero integer. It follows that when $L\in\bQ$, we have $a_n|\de_n|\to0$ if and only if $L_n=L$ for all but finitely many $n$. Therefore, Theorem \ref{t2}(i) with $k=0$ is equivalent to Theorem \ref{C:main3}(i).

Next, if $L\in\bQ$, $L_n\ne L$ for all but finitely many $n$, and $c_n=\gcd(a_n,b_n)\to\infty$, then \eqref{num_thry} shows that for $n$ sufficiently large, $a_n|\de_n|\ge c_n/q\to\infty$. Hence, Theorem \ref{C:main3}(iii) is a special case of Theorem \ref{t2}(ii).

Lastly, to see that Theorem \ref{C:main3}(ii) is a special case of  Theorem \ref{t2}(i), suppose there exists $k\in\bN$ such that $b_n=k\mod a_n$ for all $n$. Then $b_n=\nu_na_n+k$ for some integers $\nu_n$. Thus, $L_n=\nu_n+k/a_n$. Letting $n\to\infty$ shows that $L\in\bN$ and $\nu_n=L$ for all but finitely many $n$. We therefore have $a_n|\de_n| = |b_n - a_nL| = k$, for large enough $n$. In this case, using $p=L$ and $q=1$ and the fact that $f_L$ is periodic with period 1, we find that the function $\rho$ in Theorem \ref{t2}(i) agrees with the function $\rho$ in Theorem \ref{C:main3}(ii).

Before giving the formal proof of Theorem \ref{t2} we would like to explain the main ideas in comparison with the proof of  Theorem \ref{C:main3}.  Let $\{x\}=x-\flr{x}$. In \cite{BNS}, it is shown that the covariance between the components of the limit process $X^\rho$ is given by
  \[
  \int_0^t \rho(s)\,ds = \frac3{4L}\sum_{m\in\bZ}\lim_{n\to\infty}
    \frac1{a_n}\sum_{j=1}^{\flr{a_nt}}f_{m,L}(\{jL_n\}),
  \]
provided the above limits exist for each $m\in\bZ$. The principal challenge in analyzing these limits has been that the above summands, $f_{m,L}(\{jL_n\})$, could not be replaced by $f_{m,L}(\{jL\})$. This is because, although $L_n$ is close to $L$ for large $n$, $\{jL_n\}$ is not uniformly close to $\{jL\}$ as $j$ ranges from 1 to $\flr{a_nt}$. In \cite{BNS}, we studied these limits via the decomposition
  \[
  \sum_{j=1}^{\flr{a_nt}}f_{m,L}(\{jL_n\})
    = \al_n\sum_{j=0}^{q_n-1} f_{m,L}(\{jL_n\})
    + \sum_{j=1}^{r_n} f_{m,L}(\{jL_n\}).
  \]
Here, $b_n/a_n=p_n/q_n$, where $p_n$ and $q_n$ are relatively prime, and $\flr{a_nt}=\al_nq_n + r_n$ with $\al_n\in\bZ$ and $0\le r_n<q_n$.

To prove Theorem \ref{t2} in the case that $L\in\bQ$, we  use a different decomposition. Let $L=p/q$, where $p,q\in\bN$ are relatively prime. We then write
  \[
  \sum_{j=1}^{\flr{a_nt}}f_{m,L}(\{jL_n\})
    \approx \sum_{\eta=1}^q\sum_{i=0}^{\al_n-1}
    f_{m,L}(\{(iq + \eta)L_n\}).
  \]
In this case, since $q$ is fixed and finite, we are able to use the approximation
  \[
  \sum_{j=1}^{\flr{a_nt}}f_{m,L}(\{jL_n\})
    \approx \sum_{\eta=1}^q\sum_{i=0}^{\al_n-1}
    f_{m,L}(\{iqL_n + \eta L\}).
  \]
Since $qL=p$, we have $iqL_n=ip+iq\de_n$. Thus, we have
  \[
  \sum_{j=1}^{\flr{a_nt}}f_{m,L}(\{jL_n\})
    \approx \sum_{\eta=1}^q\sum_{i=0}^{\al_n-1}
    f_{m,L}(\{iq\de_n + \eta L\}).
  \]
Using a Riemann-sum argument, we will show  that for each fixed $\eta$,
  \[
  \sum_{i=0}^{\al_n-1} f_{m,L}(\{iq\de_n + \eta L\})
    \approx \frac1{q\de_n}\int_0^{a_n\de_nt} f_{m,L}(\{x + \eta L\})\,dx,
  \]
giving
  \[
  \int_0^t \rho(s)\,ds = \frac3{4L}\sum_{m\in\bZ}
    \frac1q\sum_{\eta=1}^q\lim_{n\to\infty}
    \frac1{a_n\de_n}\int_0^{a_n\de_nt} f_{m,L}(\{x + \eta L\})\,dx.
  \]
We then prove the theorem case-by-case, depending on the asymptotic behavior of the sequence $a_n\de_n$. Note that the actual analysis in the proof is made somewhat more delicate by the fact that $\de_n$ may be negative.

For the case $L\notin\bQ$,   the proof will be done by adapting  the method of proof of the equidistribution theorem based on Fourier series expansions. This theorem  says that for any interval $I\subset [0,1)$,
  \[
  \lim_{n\to\infty} \frac1n |\{k: \{kL\} \in I, 1 \le k \le n\}| = |I|,
  \]
and a simple proof can be found in \cite[Theorem 1.8]{Wa}.

\begin{proof}[Proof of Theorem \ref{t2}]
Let
  \[
  S_n(t) = E[W_{a_n}(t) W_{b_n}(t)].
  \]
By \cite[Theorem 3.1 and Lemma 3.5]{BNS}, it will suffice to show that
  \begin{equation}\label{t2.pregoal}
  S_n(t) \to \int_0^t \rho(s)\,ds,
  \end{equation}
for each $t\ge0$.

Fix $t\ge0$. Since $W_n(t)=0$ if $\flr{nt}=0$, we may assume $t>0$ and $n$ is sufficiently large so that $\flr{a_nt}>0$ and $\flr{b_nt}>0$. Recall that $\{x\}=x-\flr{x}$, and let $\wh{f}_{m,L}(x)= f_{m,L} (\{x\})$, where $f_{m,L}$ is the function introduced in $(\ref{ef})$.

In the reference \cite{BNS} it is proved (see \cite[(3.18), (3.20), and Remark 3.3]{BNS}) that
  \begin{equation}\label{t2.goal}
  \lim_{n\rightarrow \infty} S_n(t) =\frac3{4L}\sum_{m\in\bZ}
    \lim_{n\to\infty}\wt\be(m,n),
  \end{equation}
where
  \[
  \wt\be(m,n) = \frac1{a_n}\sum_{j=1}^{\flr{a_nt}}
    \wh f_{m,L}(jL_n),
  \]
provided that, for each fixed $m\in \bZ$, the limit $\lim_{n\to\infty}\wt\be(m,n)$ exists. The proof will now be done in several steps.

\medskip

\noindent\textbf{Step 1.} Assume $L\in\bQ$ and $a_n|\de_n| \to k\in(0,\infty]$. Let us write $L=p/q$, where $p$ and $q$ are relatively prime. Choose $n_0$ such that for all $n\ge n_0$, we have $\lfloor a_n t \rfloor >q$. For each $n\ge n_0$, write $\flr{a_nt} = \al_n q+r_n$, where $\al_n\in\bN$ and $0\le r_n< q$. Since $a_n\to\infty$ and $\wh f_{m,L}$ is bounded, it follows that
  \begin{align*}
  \lim_{n\to\infty}\wt\be(m,n)
    &= \lim_{n\to\infty}\frac1{a_n}\sum_{j=1}^{\al_n q}
    \wh f_{m,L}(jL_n)\\
  &= \lim_{n\to\infty}\frac1{a_n}
    \sum_{\eta=1}^q\sum_{i=0}^{\al_n-1}
    \wh f_{m,L}((iq + \eta)L_n)\\
  &= \lim_{n\to\infty}\frac1{a_n}
    \sum_{\eta=1}^q\sum_{i=0}^{\al_n-1}
    \wh f_{m,L}(ip + \eta L + (iq + \eta)\de_n)\\
  &= \sum_{\eta=1}^q\bigg(\lim_{n\to\infty}\frac1{a_n}
    \sum_{i=0}^{\al_n-1}\wh f_{m,L}(\eta L + \sgn(\de_n)x_i)\bigg),
  \end{align*}
where $x_i =(iq+\eta)|\de_n|$. Our assumption that $a_n |\de_n |\to k \in (0, \infty]$ implies that there exists $n_1\ge n_0$ such that $\de_n \ne0$ for all $n\ge n_1$.  Set $\Delta x=x_{i+1} -x_i=q|\de_n|$. Then
  \[
  \lim_{n\to\infty}\wt\be(m,n)
    = \frac 1q\sum_{\eta =1}^q\bigg(\lim_{n\to\infty}
    \frac 1{a_n|\de_n|}\sum_{i=0} ^{\al_n-1}
    \wh f_{m,L} (\eta L + \sgn(\de_n)x_i) \Delta x\bigg).
  \]
Let $\ep>0$ be arbitrary. Since $f_{m,L}$ is continuous, we may find $n_2 \ge n_1$ such that for all $n\ge n_2$,
  \[
  \sup_{\substack{|x-y|\le\De x\\ x,y\in[0,1]}}
    |f_{m,L}(x) - f_{m,L}(y)| < \ep.
  \]
Note that if $\flr{x}=\flr{y}$, then $\{x\}-\{y\}=x-y$. Thus,
  \begin{equation}\label{unifsub}
  \sup_{\substack{|x-y|\le\De x\\ \flr{x}=\flr{y}}}
    |\wh f_{m,L}(x) - \wh f_{m,L}(y)| < \ep,
  \end{equation}
for all $n\ge n_2$. Let
  \[
  J_n = \{0\le i < \al_n:
    \flr{\eta L + \sgn(\de_n)x_i} = \flr{\eta L + \sgn(\de_n)x_{i+1}}\}.
  \]
Note that if $i\in J_n$ and $x\in[x_i,x_{i+1}]$, then
  \[
  \flr{\eta L + \sgn(\de_n)x} = \flr{\eta L + \sgn(\de_n)x_i}.
  \]
Thus, using \eqref{unifsub}, we obtain
  \[
  \bigg|\wh f_{m,L} (\eta L + \sgn(\de_n)x_i) \Delta x
    - \int_{x_i}^{x_{i+1}}
    \wh f_{m,L} (\eta L + \sgn(\de_n)x)\,dx\bigg|
    \le \ep\De x = \ep q|\de_n|,
  \]
for all $i\in J_n$ and $n\ge n_2$. Also, since $\wh f_{m,L}$ is bounded, there is a constant $M$ such that
  \[
  \bigg|\wh f_{m,L} (\eta L + \sgn(\de_n)x_i) \Delta x
    - \int_{x_i}^{x_{i+1}}
    \wh f_{m,L} (\eta L + \sgn(\de_n)x)\,dx\bigg|
    \le M\De x = M q|\de_n|,
  \]
for all $i\notin J_n$ and $n\ge n_2$. Therefore,
  \[
  \sum_{i=0}^{\al_n-1}\wh f_{m,L}(\eta L + \sgn(\de_n)x_i)\De x
    = \int_{x_0}^{x_{\al_n}}\wh f_{m,L}(\eta L + \sgn(\de_n)x)\,dx
    + R_n,
  \]
where
  \[
  |R_n| \le (\ep|J| + M(\al_n - |J|))q|\de_n|.
  \]
Note that $\al_n-|J|$ is the number of times that the monotonic sequence $\{\eta L + \sgn(\de_n)x_i\}_{i=0}^{\al_n}$ crosses an integer. Thus, $\al_n - |J| \le |x_{\al_n} - x_0| + 1=\al_nq|\de_n|+1$. Combined with $|J|\le\al_n$ and $\al_n\le a_nt/q$, we have
  \[
  |R_n| \le \ep a_n|\de_n|t + Mqa_n|\de_n|^2t + Mq|\de_n|.
  \]
Hence, since $a_n\to\infty$, we have
  \[
  \limsup_{n\to\infty}\frac{|R_n|}{a_n|\de_n|}
    \le \ep t.
  \]
Since $\ep$ was arbitrary, it follows that
  \[
  \lim_{n\to\infty}\wt\be(m,n)
    = \frac 1q\sum_{\eta =1}^q\bigg(\lim_{n\to\infty}
    \frac 1{a_n|\de_n|}\int_{x_0}^{x_{\al_n}}
    \wh f_{m,L}(\eta L + \sgn(\de_n)x)\,dx\bigg).
  \]
Now, note that $x_0=\eta|\de_n|$ and
  \[
  x_{\al_n} = (\al_nq + \eta)|\de_n|
    = (\flr{a_nt} - r_n + \eta)|\de_n|.
  \]
Since $|\eta-r_n|\le q$, we have $|x_{\al_n}-a_n|\de_n|t|\le(q+1)|\de_n|$. Thus, since $a_n\to\infty$ and $\wh f_{m,L}$ is bounded, we have
  \begin{equation}\label{t2.1}
  \lim_{n\to\infty}\wt\be(m,n)
    = \frac 1q\sum_{\eta =1}^q\bigg(\lim_{n\to\infty}
    \frac 1{a_n|\de_n|}\int_0^{a_n|\de_n|t}
    \wh f_{m,L}(\eta L + \sgn(\de_n)x)\,dx\bigg).
  \end{equation}
  
\medskip

\noindent\textbf{Step 2.} Assume $L\in\bQ$ and $a_n|\de_n| \to \infty$. Then, taking into account that the function $\wh f_{m,L}$ has period one, we can write
  \begin{multline*}
  \int_0^{a_n|\de_n|t}\wh f_{m,L}(\eta L + \sgn(\de_n)x)\,dx\\
    = \flr{a_n|\de_n|t}\int_0^1\wh f_{m,L}(x)\,dx
    + \int_0^{a_n|\de_n|t-\flr{a_n|\de_n|t}}
    \wh f_{m,L}(\eta L+\sgn(\de_n)x)\,dx.
  \end{multline*}
From \eqref{t2.1} and the fact that $\wh f_{m,L}$ is bounded, we then obtain
  \[
  \lim_{n\to\infty}\wt\be(m,n) = t\int_0^1\wh f_{m,L}(x)\,dx.
  \]
By \eqref{t2.goal} and the fact that $f_L=\sum_{m\in\bZ}f_{m,L}$ is periodic with period 1, this gives
  \[
  \lim_{n\to\infty} S_n(t) = \frac{3t}{4L}\int_0^1 f_L(x)\,dx.
  \]
In light of \eqref{t2.pregoal}, this completes half the proof of Theorem \ref{t2}(ii). To complete the proof of Theorem \ref{t2}(ii), it remains only to consider the case $L\notin\bQ$, and this will be done in the final step of this proof.

\medskip

\noindent\textbf{Step 3.} Assume $L\in\bQ$, $a_n|\de_n| \to k\in(0,\infty)$, and $\de_n >0$ for all $n$. From \eqref{t2.1}, we have
  \[
  \lim_{n\to\infty}\wt\be(m,n)
    = \frac 1q\sum_{\eta =1}^q\bigg(
    \frac 1k\int_0^{kt}
    \wh f_{m,L}(\eta L + x)\,dx\bigg).
  \]
From \eqref{t2.goal}, the fact that $f_L$ has period 1, the identity $L=p/q$, and the substitution $x=ks$, this gives
  \begin{align*}
  \lim_{n\to\infty}S_n(t) &= \frac3{4Lk}\int_0^{kt}
    \frac1q\sum_{\eta=1}^q f_L(\eta L+x)\,dx\\
  &= \int_0^t\frac3{4p}\sum_{\eta=1}^q f_L(\eta L + ks)\,ds.
  \end{align*}

\medskip

\noindent\textbf{Step 4.} Assume $L\in\bQ$, $a_n|\de_n| \to k\in(0,\infty)$, and $\de_n
<0$ for all $n$. As in Step 3, we have
  \[
  \lim_{n\to\infty}S_n(t)
    = \int_0^t\frac3{4p}\sum_{\eta=1}^q f_L(\eta L - ks)\,ds.
  \]
By Lemma \ref{lem1},
  \begin{align*}
  \lim_{n\to\infty}S_n(t)
    &= \int_0^t\frac3{4p}\sum_{\eta=1}^q
    f_L((q - \eta +1) L + ks)\,ds\\
  &= \int_0^t\frac3{4p}\sum_{\eta=1}^q f_L(\eta L + ks)\,ds.
  \end{align*}

\medskip

\noindent\textbf{Step 5.} We now prove Theorem \ref{t2}(i). From the discussion following the statement of Theorem \ref{t2}(i), we have that Theorem \ref{t2}(i) with $k=0$ is equivalent to Theorem \ref{C:main3}(i). Thus, we may assume $a_n|\de_n|\to k\in(0,\infty)$. Let $\{S_{n_m}\}$ be any subsequence of $\{S_n\}$. Recall from Step 1 that $\de_n\ne0$ for all $n\ge n_1$.  Choose a subsequence $\{S_{n_{m(j)}}\}$ of $\{S_{n_m}\}$ such that $\sgn(\de_{n_{m(j)}})$ does note depend on $j$. By Steps 3 and 4,
  \[
  \lim_{j\to\infty}S_{n_{m(j)}}(t)
    = \int_0^t\frac3{4p}\sum_{\eta=1}^q f_L(\eta L + ks)\,ds.
  \]
Since every subsequence has a subsequence converging to this limit, it follows that
  \[
  \lim_{n\to\infty}S_n(t)
    = \int_0^t\frac3{4p}\sum_{\eta=1}^q f_L(\eta L + ks)\,ds.
  \]
Note that $\eta L=\eta p/q$ and, since $p$ and $q$ are relatively prime,
  \[
  \{\eta p/q: 1 \le \eta \le q\}
    = \{j/q: 1 \le j \le q\}.
  \]
Thus,
  \[
  \lim_{n\to\infty}S_n(t)
    = \int_0^t\frac3{4p}\sum_{j=1}^q
    f_L\left({\frac jq + ks}\right)\,ds.
  \]
By \eqref{t2.pregoal}, this completes the proof of Theorem \ref{t2}(i).

\medskip

\noindent\textbf{Step 6.} We now prove Theorem \ref{t2}(ii). From Step 2, it suffices to consider $L\notin\bQ$. As in the proof of the equidistribution theorem,  the idea is to approximate the function $f_{m,L}$ by its truncated Fourier series.

Fix $m\in\bZ$ and let $\ep>0$ be arbitrary. Set
  \[
  F_N(x) =\sum_{k=-N} ^{N} c_k e^{2\pi i kx},
  \]
where
  \[
  c_k= \int_0^1 f_{m,L} (y) e^{2\pi i ky}dy.
  \]
Since $\|f_{m,L}\|_\infty\le8$ (see \cite[(2.23)]{BNS}), we have $|c_k|\le 8$.

The function $f_{m,L}$ is H\"older continuous of order $1/3$. Therefore, by Jackson's theorem,  the sequence $F_N$ converges uniformly on $[0,1]$ to $f_{m,L}$, and we may choose $N\in\bN$ such that for all $x\in [0,1]$,
  \[
  |F_N(x) - f_{m,L}(x) |  < \ep.
  \]
Recalling that $\{x\} =x-\flr{x}$, we then have for any fixed $t>0$,
  \begin{align*}
  \wt\be(m,n)
    &= \frac 1{a_n}\sum_{j=1}^{\flr{a_nt}} f_{m,L}(\{jL_n\})
    =\frac 1{a_n}\sum_{j=1}^{\flr{a_nt}} F_N(\{jL_n\}) + O(\ep)\\
  &= \frac 1{a_n}\sum_{k=-N}^N c_k\sum_{j=1}^{\flr{a_nt}}
    e^{2\pi ikjL_n} + O(\ep).
  \end{align*}
In the above and for the remainder of this proof, the coefficients implied by the big O notation depend only on $t$.

Note that for any integer $M\ge 1$ and for any complex number $\al$,
  \begin{equation}\label{eq4}
  \sum_{j=1}^M \al^j = \begin{cases}
      \frac{\al(1 - \al^M)}{1 - \al} &\text{if $\al\ne1$},\\
      M                              &\text{if $\al=1$}.
    \end{cases}
  \end{equation}
Set $\si_{k,n} = \sum_{j=1}^{\flr{a_nt}} e^{2\pi ikjL_n}$. Then, $\si_{k,n} =\flr{a_nt}$ if $kL_n \in \bZ$. If $kL_n \notin \bZ$, then
  \[
  |\si_{k,n}| = \left|{e^{2\pi ikL_n}
    \frac{1 - e^{2\pi ikL_n\flr{a_n t}}} {1 - e^{2\pi ikL_n}}
    }\right|
    \le \frac 2{|1 - e^{2\pi ikL_n}|}.
  \]
Since $L_n$ converges to $L$ which is irrational, there exists $\de>0$ and $n_0\in \bN$ such that for all $n\ge n_0$ and for all $k \in \{-N, \dots, N\}$, we have $|1- e^{2\pi i kL_n}| \ge \de$. Therefore, $|\si_{k,n}| \le 2/\de$ whenever $n\ge n_0$, $k \in \{-N, \dots, N\}$, and $kL_n\notin\bZ$.

Recall that $L_n =p_n/q_n$, where $p_n$ and $q_n$ are relatively prime numbers. Hence, $kL_n \in \bZ$ if and only if $q_n\mid k$. Therefore, we obtain
  \begin{align*}
  \wt\be(m,n) &= \frac 1{a_n}\sum_{k=-N}^{N} c_k\si_{k,n} + O(\ep)\\
  &= \frac 1{a_n}\sum_{\substack{k=-N\\ q_n\mid k}}^{N} c_k \flr{a_n t}
    + R_n + O(\ep),
  \end{align*}
where
  \[
  |R_n| = \bigg|\frac 1{a_n}\sum_{\substack{k=-N\\ q_n\nmid k}}^{N}
    c_k\si_{k,n}\bigg|
    \le \frac 2{a_n\de}\sum_{k=-N}^{N}|c_k| = O(Na_n ^{-1}\de^{-1}).
  \]
By (\ref{eq4}), 
  \[
  \frac 1{q_n} \sum_{j=1}^{q_n} (e^{2\pi ik/q_n})^j
    = \begin{cases}
      1 &\text{if $q_n \mid k$},\\
      0 &\text{if $q_n \nmid k$}.
  \end{cases}
  \]
As a consequence, we can write
  \begin{align*}
  \wt\be(m,n) &=  \frac{\flr{a_nt}}{a_n}\sum_{k=-N}^N
    c_k\frac 1{q_n}\sum_{j=1}^{q_n} e^{2\pi ikj/q_n}
    + O(Na_n ^{-1}\de^{-1}) + O(\ep)\\
  &= \frac{\flr{a_n t}}{a_nq_n}\sum_{j=1}^{q_n} F_N(j/q_n)
    + O(Na_n ^{-1}\de^{-1}) + O(\ep)\\
  &= \frac{\flr{a_n t}}{a_nq_n}\sum_{j=1}^{q_n} f_{m,L}(j/q_n)
    + O(Na_n ^{-1}\de^{-1}) + O(\ep).
  \end{align*}
In \cite{BNS}, it is shown that $q_n\to\infty$ when $L\notin\bQ$. Thus, letting $n$ tend to infinity and using the fact that $f_{m,L}$ is Riemann integrable on $[0,1]$ gives
  \[
  \limsup_{n\to\infty}\bigg|
    \wt\be(m,n) - t\int_0^1 f_{m,L}(x)\,dx
    \bigg| = O(\ep).
  \]
Since $\ep$ was arbitrary, and from \eqref{t2.goal} and \eqref{t2.pregoal}, this completes the proof.
\end{proof}

\noindent\textbf{Examples.} Here are some examples that were not covered by the results of \cite{BNS}. Suppose that $a_n =n^2$ and $b_n =(n+1)^2$. In this case $L_n \to 1$ and $a_n|\de_n|=|b_n-a_nL|=2n+1 \to \infty$. Therefore,
 \[
 \rho(t) = \frac 34 \int_0^1  f_1(x)\,dx.
 \] 
If $a_n =2n$ and $b_n =3n+1$, then $L_n \to 3/2$ and $a_n |\de_n|=|b_n-a_n L|=1$ for all $n$. Therefore,  
 \[
 \rho(t) = \frac 14 \left(
   f_{3/2}\left(\frac 12 + t\right) + f_{3/2}(t)
   \right).
 \]


\end{document}